\newtheorem{theorem}{Theorem}
\newtheorem{lemma}[theorem]{Lemma}
\newcommand{\R}{\mathbb{R}}
\newcommand{\bOmega}{{\mathbf{\Omega}}}
\newcommand{\by}{{\mathbf{y}}}
\newcommand{\bmu}{{\boldsymbol{\mu}}}
\newcommand{\Span}{\mathop{\mathrm{span}}}
\DeclareMathOperator{\rank}{rank}
\DeclareMathOperator{\sign}{sign}
\DeclareMathOperator*{\esssup}{ess\, sup}
\title{On the approximation of vector-valued functions\\by volume sampling}
\author{Daniel Kressner\thanks{ANCHP, Institute of Mathematics, EPFL, CH-1015 Lausanne,
Switzerland}
\and Tingting Ni\thanks{SYCAMORE Lab, EPFL, CH-1015 Lausanne,
Switzerland}
\and Andr\'e Uschmajew\thanks{Institute of Mathematics \& Centre for Advanced Analytics and Predictive Sciences, University of Augsburg, 86159 Augsburg, Germany}}
\date{}
\begin{document}

\maketitle

\begin{abstract} 
Given a Hilbert space $\mathcal H$ and a finite measure space $\Omega$, the approximation of a vector-valued function $f: \Omega \to \mathcal H$ by a $k$-dimensional subspace $\mathcal U \subset \mathcal H$ plays an important role in dimension reduction techniques, such as reduced basis methods for solving parameter-dependent partial differential equations. For functions in the Lebesgue--Bochner space $L^2(\Omega;\mathcal H)$, the best possible subspace approximation error $d_k^{(2)}$ is characterized by the singular values of $f$. However, for practical reasons, $\mathcal U$ is often restricted to be spanned by point samples of $f$. We show that this restriction only has a mild impact on the attainable error; there always exist $k$ samples such that the resulting error is not larger than $\sqrt{k+1} \cdot d_k^{(2)}$. Our work extends existing results by Binev at al.~(\textit{SIAM J. Math. Anal.}, 43(3):1457--1472, 2011) on approximation in supremum norm and by Deshpande et al.~(\textit{Theory Comput.}, 2:225--247, 2006) on column subset selection for matrices.
\end{abstract} 

\section{Introduction}

Let $(\Omega,\mathcal A, \mu)$ be a finite measure space with $\mu(\Omega) > 0$. Let $\mathcal H$ be a real Hilbert space with inner product $\langle \cdot, \cdot \rangle$ and induced norm $\|\cdot \|$. In this work, we consider the approximation of a vector-valued function $f: \Omega \to \mathcal H$ by its projection to a finite-dimensional subspace $\mathcal U \subset \mathcal H$. For $1 \le p \le \infty$, possible error measures are then the approximation numbers
\begin{equation*} 
 d^{(p)}_k(f) = \inf \big\{ \| f - g \|_{L^p(\Omega;\mathcal H)}   \colon \text{$\mathcal U$ is a $k$-dimensional subspace of $\mathcal H$ and $g \in L^p(\Omega;\mathcal U)$} \big\},
\end{equation*}
where
$\|\cdot \|_{L^p(\Omega;\mathcal H)}$ defines the norm on the Lebesgue-Bochner space $L^p(\Omega;\mathcal H)$.

The number $d^{(p)}_k(f)$ measures the best possible approximation of $f$ in $L^p(\Omega;\mathcal H)$ by a function $g$ that maps (almost all points in $\Omega$) into some $k$-dimensional subspace $\mathcal U \subseteq \mathcal H$. Obviously, for given $\mathcal U$ the best choice for $g$ is $g(y) = P_{\mathcal U} f(y)$ almost everywhere, where $P_{\mathcal U}$ is the $\mathcal H$-orthogonal projection onto $\mathcal U$. With this notation, we have
\begin{equation} \label{eq:KolmogorovWidth}
d^{(p)}_k(f) = \inf \big\{ \| f- P_{\mathcal U} f \|_{L^p(\Omega;\mathcal H)}   \colon \text{$\mathcal U$ is a $k$-dimensional subspace of $\mathcal H$} \big\}.
\end{equation}
In the special case $p = \infty$, the quantity 
\[
d^{(\infty)}_k(f) = \inf \big\{ \esssup_{y \in \Omega}{} \| f(y)- P_{\mathcal U} f(y) \| \colon \text{$\mathcal U$ is a $k$-dimensional subspace of $\mathcal H$} \big\}
\]
is the Kolmogorov width~\cite{Kolmogoroff1936,Pinkus1985} of the ``essential'' image of $f$ in $\mathcal H$. For general $p < \infty$, the quantities $d^{(p)}_k(f)$ are called \emph{average} Kolmogorov widths and can even be generalized to non Hilbert-space settings, see, e.g.,~\cite[Section~2.2]{Malykhin2022} and references therein. Several results on the asymptotic order of $d^{(p)}_k(f)$ are available for the case that $\mathcal H$ is a Sobolev space and $f$ is a Gaussian random variable; see~\cite{Wang2019} for an overview.

In many situations, one hopes or even expects that the above widths decay rapidly as the dimension $k$ of the subspace increases. One then aims at constructing an actual $k$-dimensional subspace $\mathcal U$ that results in an error close to $d^{(p)}_k(f)$. A popular approach is to sample $f$ at $k$ well chosen points $y_1,\ldots, y_k \in \Omega$ and define $\mathcal U$ as the span of $f(y_1),\ldots,f(y_k)$. For example, reduced basis methods~\cite{Hesthaven2016,Quarteroni2016} for solving parameter-dependent partial differential equations commonly use a greedy strategy for selecting parameter points $y_1,\ldots, y_k$ successively. Similarly, orthogonal matching pursuit~\cite{Temlyakov2011} and the empirical interpolation method~\cite{Barrault2004} can be viewed as greedy strategies. Recently, deep neural networks have been demonstrated to be effective at approximating $f$ from (noisy) samples; see, e.g.,~\cite{Adcock2022}.

Existing convergence analyses~\cite{Barrault2004,Buffa2012,Drmac2016,Temlyakov2011} of such sample-based methods typically establish error bounds that remain \emph{qualitatively} close to~\eqref{eq:KolmogorovWidth}, but the involved prefactors can be huge, often growing exponentially with $k$. For example, the result in~\cite[Section~2]{Buffa2012} for $p=\infty$ shows that a greedy selection of $k$ points leads to an  error  (nearly) bounded by $2^{k+1}(k+1) d^{(\infty)}_k(f)$; this bound has been improved to $2^{k+1}/\sqrt{3}\cdot d^{(\infty)}_k(f)$ in \cite[Theorem 4.4]{Binev2011}. The results from~\cite{Binev2011} also show that greedy algorithms recover algebraic or exponential decays of the Kolmogorov widths.
On the other hand, the factor $2^k$ is, in general, unavoidable when using a greedy selection and this raises the question whether the use of samples necessarily leads to large prefactors or whether there exist sample selections leading to more favorable bounds~\eqref{eq:KolmogorovWidth}. Again for the case $p = \infty$ it was shown in~\cite[Theorem 4.1]{Binev2011} that there always exist sample points $y_1,\dots,y_k$ for which the prefactor can indeed be reduced to $k+1$. The proof of this result is nonconstructive and uses a maximum volume argument, a classical tool in approximation theory. A maximum volume principle also underlies a result from~\cite{Goreinov2001} on the rank-$k$ approximation of matrices in the elementwise maximum norm by cross interpolation. However, there is evidence~\cite{Deshpande2006} that such maximum volume arguments do not extend to the case $p=2$, corresponding to approximation in Frobenius norm for matrices, but randomized sampling arguments can be used instead.

\paragraph*{Main result}
In this note, we treat the case $p = 2$ and establish a prefactor $\sqrt{k+1}$.

\begin{theorem} \label{thm:main}
 Let $f \in L^2(\Omega;\mathcal H)$. Then there exists a measurable set $\hat \bOmega \subseteq \Omega \times \cdots \times \Omega = \Omega^k$ of positive product measure $\mu^{\otimes k}$ such that for $\mu^{\otimes k}$-almost every $\by = (y_1,\dots,y_k) \in \hat \bOmega$ it holds that
 \begin{equation} \label{eq:maininequality}
    d^{(2)}_k(f) \le \| f- \Pi_{\by} f \|_{L^2(\Omega;\mathcal U)} \le \sqrt{k+1} \cdot  d^{(2)}_k(f),
  \end{equation}
 where $\Pi_{\by} f$ denotes the $\mathcal H$-orthogonal projection of $f$ onto the span of $f(y_1),\ldots, f(y_k)$.
\end{theorem}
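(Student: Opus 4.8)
The plan is to prove \eqref{eq:maininequality} by a \emph{volume sampling} argument, the continuous counterpart of the column-subset-selection technique of Deshpande et al. First I would invoke the singular value decomposition of $f$, viewed as a Hilbert--Schmidt operator: there are singular values $\sigma_1 \ge \sigma_2 \ge \cdots \ge 0$ with $\sum_j \sigma_j^2 = \|f\|_{L^2(\Omega;\mathcal H)}^2 < \infty$, an orthonormal system $(u_j)$ in $\mathcal H$, and an orthonormal system $(v_j)$ in $L^2(\Omega)$, such that $f(y) = \sum_j \sigma_j v_j(y)\, u_j$ for almost every $y$, and $d_k^{(2)}(f)^2 = \sum_{j>k}\sigma_j^2$. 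I may assume $\sigma_k>0$ (i.e.\ $\rank f \ge k$); the degenerate case $\rank f < k$ forces $d_k^{(2)}(f)=0$ and is handled separately by selecting points that span the range. The idea is then to equip $\Omega^k$ with the probability measure whose density with respect to $\mu^{\otimes k}$ is proportional to the squared volume $G(\by) = \det\big(\langle f(y_i),f(y_j)\rangle\big)_{i,j=1}^k$ of the sampled vectors, and to bound the \emph{expected} squared projection error against this density.

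The central computation is the evaluation of two Gram integrals. Writing the Gram matrix as $M M^\top$ with $M_{i\ell} = \sigma_\ell v_\ell(y_i)$ and applying the Cauchy--Binet formula gives
\[
 G(\by) = \sum_{|T|=k} \Big(\prod_{\ell \in T}\sigma_\ell^2\Big)\, \det\!\big(v_\ell(y_i)\big)_{i\in[k],\,\ell\in T}^2 .
\]
Integrating termwise and using the Andr\'eief (continuous Cauchy--Binet) identity $\int_{\Omega^m}\det(\phi_i(y_j))\det(\psi_i(y_j))\,d\mu^{\otimes m} = m!\det(\int \phi_i\psi_j\,d\mu)$ together with orthonormality of the $v_\ell$ yields the normalizing constant $Z = \int_{\Omega^k} G\,d\mu^{\otimes k} = k!\, e_k(\sigma_1^2,\sigma_2^2,\dots)$, where $e_k$ denotes the $k$-th elementary symmetric polynomial. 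Next I would use the classical distance formula $\|f(t)-\Pi_{\by}f(t)\|^2 = G_{k+1}(t;\by)/G(\by)$, where $G_{k+1}(t;\by)$ is the Gram determinant of $f(y_1),\dots,f(y_k),f(t)$, valid for $\mu^{\otimes k}$-a.e.\ $\by$ on $\{G>0\}$. Integrating over $t$ and then over $\by$ against the density $G/Z$, the factor $G(\by)$ cancels, and Tonelli turns the expected error into a symmetric integral over $\Omega^{k+1}$:
\[
 \mathbb E\big[\|f-\Pi_{\by}f\|_{L^2(\Omega;\mathcal H)}^2\big]
 = \frac{1}{Z}\int_{\Omega^{k+1}} G_{k+1}\,d\mu^{\otimes(k+1)}
 = \frac{(k+1)!\,e_{k+1}(\sigma^2)}{k!\,e_k(\sigma^2)}
 = (k+1)\,\frac{e_{k+1}(\sigma^2)}{e_k(\sigma^2)},
\]
the middle integral being evaluated by the very same Cauchy--Binet/Andr\'eief computation with $k+1$ in place of $k$.

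It then remains to bound the ratio of symmetric polynomials. Setting $x_j=\sigma_j^2$, sorted decreasingly, I would group each $(k+1)$-element index set $S$ by its maximal element $j=\max S \ge k+1$ and write $S = S'\cup\{j\}$ with $S'\subseteq\{1,\dots,j-1\}$, $|S'|=k$; since $\sum_{|S'|=k,\,S'\subseteq\{1,\dots,j-1\}}\prod_{i\in S'}x_i = e_k(x_1,\dots,x_{j-1}) \le e_k(x)$, this gives $e_{k+1}(x) \le e_k(x)\sum_{j>k}x_j$, i.e.\ $e_{k+1}(\sigma^2)/e_k(\sigma^2) \le d_k^{(2)}(f)^2$. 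Hence $\mathbb E[\|f-\Pi_{\by}f\|^2] \le (k+1)\,d_k^{(2)}(f)^2$. Because this holds for the average against a probability density that is strictly positive on $\{G>0\}$, the set $\hat\bOmega = \{\by : \|f-\Pi_{\by}f\|^2 \le (k+1)\,d_k^{(2)}(f)^2,\ G(\by)>0\}$ must have positive $\mu^{\otimes k}$-measure, which yields the upper bound in \eqref{eq:maininequality}; the lower bound is automatic since $\Pi_{\by}$ projects onto a subspace of dimension at most $k$.

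The main obstacle I anticipate is not any single inequality but the rigorous bookkeeping of the two Gram integrals: justifying the pointwise SVD representation of the chosen measurable representative of $f$, the termwise integration and the Andr\'eief identity (all integrands are nonnegative, so Tonelli applies cleanly), the a.e.\ validity of the Gram distance formula on $\{G>0\}$, and the low-rank degenerate case. The combinatorial inequality and the final averaging argument are comparatively straightforward.
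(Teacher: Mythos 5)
Your proposal is correct and follows essentially the same route as the paper: a volume-sampling density proportional to $\det G^{(k)}$, the Schur-complement/Gram distance identity, an expected-error ratio of elementary symmetric polynomials in the $\sigma_j^2$ bounded by grouping $(k+1)$-sets by their maximal element, and a positive-measure averaging argument, with the low-rank case handled separately. Your Cauchy--Binet/Andr\'eief phrasing of the Gram integrals is just a repackaging of the paper's Lemma~\ref{lemma:integral}, which proves the same identity $\int_{\Omega^k}\det G^{(k)}\,\mathrm{d}\bmu = k!\,e_k(\sigma^2)$ by direct permutation expansion and a truncation/dominated-convergence argument.
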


Here we do not assume $f$ to be continuous. In general, the elements $f$ of $L^2(\Omega;\mathcal H)$ are equivalence classes of functions which are only $\mu$-almost everywhere equal. The theorem is formulated in such a way that it applies to the whole equivalence class, in the sense that the set $\hat \bOmega$ is the same for every representative. Note that the set of $\by \in \hat \bOmega$ that actually satisfy~\eqref{eq:maininequality} can be different for two representatives, but only by a set of product measure zero. The rest of this note is concerned with the proof of Theorem~\ref{thm:main}.

\paragraph*{Finite-dimensional setting}
The finite-dimensional analogue of Theorem~\ref{thm:main} is due to Deshpande et al.~\cite{Deshpande2006} and reads as follows: Given a matrix $A = [a_1,\ldots, a_n] \in \R^{m\times n}$ and $k < m \le n$, there exist $k$ column indices $j_1, \ldots, j_k \in \{1,\ldots,n\}$ such that 
\begin{equation} \label{eq:ResultDeshpande}
 (\sigma_{k+1}^2 + \cdots + \sigma_m^2)^{1/2} \le \| A- P_{j_1,\ldots,j_k} A \|_{F} \le \sqrt{k+1} \cdot  (\sigma_{k+1}^2 + \cdots + \sigma_m^2)^{1/2},
 \end{equation}
where $\|\cdot\|_F$ denotes the Frobenius norm, $\sigma_1 \ge \cdots \ge \sigma_m \ge 0$ are the singular values of~$A$, and $P_{j_1,\ldots,j_k}$ denotes the orthogonal projection onto $\Span\{a_{j_1},\ldots,a_{j_k}\}$. This result is included in Theorem~\ref{thm:main} by considering the uniform measure on $\Omega = \{1,\ldots, n\}$, the Euclidean space $\mathcal H = \R^m$ and letting $f(i) = a_i$ for $i = 1,\ldots, n$. The proof in~\cite{Deshpande2006} uses a probabilistic method, showing that~\eqref{eq:ResultDeshpande} holds in expectation when columns are sampled with a probability proportional to their induced volume.

An example from~\cite{Deshpande2006} shows that the prefactor $\sqrt{k+1}$ in~\eqref{eq:ResultDeshpande} can, in general, not be improved. In follow-up work, Deshpande and Rademacher~\cite{Deshpande2010} derived a more efficient algorithm for computing a column subset selection satisfying~\eqref{eq:ResultDeshpande}; see~\cite{Cortinovis2020} for further improvements concerning its numerical realization. For fixed $j_1,\ldots,j_k$, even tiny changes of $A$ may result in a significantly larger error $\| A- P_{j_1,\ldots,j_k} A \|_{F}$~\cite{Cortinovis2023}. This lack of continuity makes it difficult to combine the algorithms from~\cite{Cortinovis2020,Deshpande2010} with a discretization of the infinite-dimensional setting. For the time being, Theorem~\ref{thm:main} is an existence result based on a probabilistic argument and it remains an open problem to design an efficient sampling procedure that leads to an error (approximately) bounded by $\sqrt{k+1} \cdot  d^{(2)}_k(f)$.

While our main strategy for the proof of Theorem~\ref{thm:main} follows~\cite{Deshpande2006}, several nontrivial modifications are needed in order to address the infinite-dimensional case.

\paragraph*{Bochner integral and measurable functions}
Before proceeding, let us recall the basic definitions regarding the Bochner integral. A function $f : \Omega \to \mathcal H$ is called \emph{strongly $\mu$-measurable} if it is the $\mu$-almost everywhere pointwise strong limit of simple functions $f_\ell : \Omega \to \mathcal H$. It is called Bochner integrable if, in addition, $\int_{\Omega} \| f - f_\ell \| \, \mathrm{d} \mu \to 0$ for $\ell \to \infty$. In this case the Bochner integral $\int_\Omega f \, \mathrm{d}\mu$ of $f$ is defined as the limit of integrals of the simple functions $f_\ell$ which then does not depend on the choice of the sequence~$f_\ell$. Since the real-valued functions $y \mapsto \| f(y) - f_\ell(y) \|$ are themselves strongly $\mu$-measurable, the integrals $\int_{\Omega} \| f - f_\ell \|$ in this definition are well-defined. In particular, the integrands are $\mathcal A$-measurable functions if $\mu$ is a complete measure, or otherwise $\mu$-almost everywhere equal to an $\mathcal A$-measurable functions~\cite[Proposition~1.1.16]{Hytonen2016}. Note that a similar reasoning will be implicitly assumed at other occurrences in the paper when composing real-valued functions from strongly $\mu$-measurable $\mathcal H$-valued functions without further mentioning.

The Hilbert space $L^2(\Omega;\mathcal H)$ consists of equivalence classes of Bochner integrable functions~$f$ for which $\int_\Omega \| f \|^2 \, \mathrm{d}\mu < \infty$. While it is common practice to not distinguish between a function and its equivalence class, we will often work with pointwise arguments of particular representatives. For more details on Bochner integrals and the spaces $L^p(\Omega;\mathcal H)$ we refer to~\cite{Hytonen2016}.

\section{Schmidt decomposition}

The Hilbert space $L^2(\Omega; \mathcal H)$ is isometrically isomorphic to the space $HS(L^2(\Omega); \mathcal H)$ of Hilbert--Schmidt operators from $L^2(\Omega)$ to $\mathcal H$. This isometry is realized by associating $f \in L^2(\Omega; \mathcal H)$ with the bounded integral operator
\[
 T_f: L^2(\Omega) \to \mathcal H, \qquad v \mapsto T_f v = \int_\Omega f v \, \mathrm{d}\mu.
\]
To see this, let $\{ u_i \colon i \in I \}$ and $\{ v_j \colon j \in J \}$ be orthonormal bases of $\mathcal H$ and $L^2(\Omega)$, respectively, with $J$ being countable. Then it can be routinely verified by properties of the Bochner integral that $\{ u_i v_j \colon i \in I, \ j \in J \}$ is an orthonormal basis of $L^2(\Omega;\mathcal H)$. The integral operator associated with $u_i v_j$ is the rank-one operator $u_i \langle v_j, \cdot \rangle_{L^2(\Omega)}$. These rank-one operators  form an orthonormal basis of $HS(L^2(\Omega);\mathcal H)$; see, e.g.,~\cite[Theorem~4.4.5]{Hsing2015}. This shows that $f \mapsto T_f$ is an isometric isomorphism between $L^2(\Omega; \mathcal H)$ and $HS(L^2(\Omega);\mathcal H)$ and leads to the following well-known decomposition of $f$.

\begin{theorem}[Schmidt decomposition]\label{thm: Schmidt decomposition}
Let $f \in L^2(\Omega; \mathcal H)$. There exist at most countable orthonormal systems $\{u_i \in \mathcal H: i=1,2,\ldots, r \}$, $\{v_i \in L^2(\Omega,\mu): i=1,2,\ldots, r \}$ (with $r \in \mathbb N \cup \{+\infty\}$) and singular values $\sigma_1\ge \sigma_2 \ge \dots$ with $\sigma_i > 0$ for $i=1,2,\dots, r$ such that 
\[
f = \sum_{i = 1}^r \sigma_i u_i v_i,
\]
with the series converging in $L^2(\Omega; \mathcal H)$. It holds that
\[
f(y) \in \overline{\Span}\{ u_i : i=1,2,\dots,r \} \quad \text{$\mu$-a.e.}
\]
and
\begin{equation}\label{eq:SVD}
f(y) = \sum_{i=1}^r \sigma_i u_i v_i(y) \quad \text{$\mu$-a.e.},
\end{equation}
with the series converging in $\mathcal H$.
\end{theorem}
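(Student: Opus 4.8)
The plan is to obtain the decomposition from the singular value decomposition of the integral operator $T_f$ and then to recover the pointwise statements through a Fourier-coefficient argument. Since $T_f \in HS(L^2(\Omega);\mathcal H)$ is Hilbert--Schmidt, it is in particular compact, so the spectral theorem for compact operators furnishes at most countable orthonormal systems $\{v_i\} \subset L^2(\Omega)$ and $\{u_i\} \subset \mathcal H$ together with singular values $\sigma_1 \ge \sigma_2 \ge \dots > 0$ satisfying $\sum_i \sigma_i^2 < \infty$ such that $T_f = \sum_i \sigma_i\, u_i \langle v_i, \cdot\rangle_{L^2(\Omega)}$, the series converging in Hilbert--Schmidt norm. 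Because $T_{u_i v_i} = u_i \langle v_i, \cdot \rangle_{L^2(\Omega)}$ and $f \mapsto T_f$ is an isometric isomorphism, applying its inverse term by term yields $f = \sum_i \sigma_i u_i v_i$ with convergence in $L^2(\Omega;\mathcal H)$, which is the first assertion.

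For the pointwise claims I would first identify the Fourier coefficients of $f(y)$ along the $u_i$. Since bounded linear maps commute with the Bochner integral, for every $v \in L^2(\Omega)$ we have $\int_\Omega \langle u_i, f(y)\rangle v(y)\,\mathrm{d}\mu = \langle u_i, T_f v\rangle = \sigma_i \langle v_i, v\rangle_{L^2(\Omega)}$, and hence $\langle u_i, f(\cdot)\rangle = \sigma_i v_i(\cdot)$ in $L^2(\Omega)$, so $\mu$-almost everywhere, for each fixed $i$. I would then show that $f(y)$ lies in $V := \overline{\Span}\{u_i\}$ almost everywhere: writing $P$ for the $\mathcal H$-orthogonal projection onto $V$ and again pulling a bounded operator through the Bochner integral, $T_{(I-P)f} = (I-P)T_f = 0$, because the range of $T_f$ is contained in $V$; by injectivity of $f \mapsto T_f$ this forces $(I-P)f = 0$ in $L^2(\Omega;\mathcal H)$, i.e.\ $f(y) \in V$ for $\mu$-almost every $y$.

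Finally I would combine these facts. As $\{u_i\}$ is a complete orthonormal basis of $V$, every $h \in V$ expands as $h = \sum_i \langle u_i, h\rangle u_i$ with convergence in $\mathcal H$. Intersecting the countably many full-measure sets on which $\langle u_i, f(y)\rangle = \sigma_i v_i(y)$ with the full-measure set on which $f(y) \in V$, one obtains for almost every $y$ that $f(y) = \sum_i \langle u_i, f(y)\rangle u_i = \sum_i \sigma_i v_i(y)\, u_i$ in $\mathcal H$, as claimed. I expect the only delicate points to be the justification that $P$ and the functionals $\langle u_i, \cdot\rangle$ may be interchanged with the Bochner integral, and the measure-theoretic bookkeeping needed to pass from the countably many $L^2(\Omega)$-identities to a single almost-everywhere statement holding simultaneously for all $i$; both are routine given the isometry and completeness results recalled before the theorem.
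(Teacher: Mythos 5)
Your proof is correct, and while its overall skeleton (singular value decomposition of $T_f$, transport through the isometric isomorphism, then pointwise recovery via Fourier coefficients) matches the paper's, you handle the crucial step --- that $f(y)$ lies in $V = \overline{\Span}\{u_i\}$ for $\mu$-almost every $y$ --- by a genuinely different argument. The paper observes that $\int_A f\,\mathrm{d}\mu = T_f\chi_A \in V$ for every $A \in \mathcal A$ and then invokes \cite[Proposition~1.2.13]{Hytonen2016}, a result saying that a Bochner integrable function on a finite measure space whose averages over all measurable sets lie in a closed (convex) set takes its values in that set almost everywhere. You instead apply the isomorphism to $(I-P)f$: since $I-P$ is bounded it commutes with the Bochner integral, so $T_{(I-P)f} = (I-P)T_f = 0$ because the HS-convergent series representation forces the range of $T_f$ into the closed subspace $V$, and injectivity of $g \mapsto T_g$ (an isometry) then gives $(I-P)f = 0$ in $L^2(\Omega;\mathcal H)$. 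This is a clean alternative that stays entirely within the isometric-isomorphism framework already set up and avoids the external measure-theoretic citation; the paper's route, in exchange, uses only Bochner integrability of $f$ (not the $L^2$/Hilbert--Schmidt structure), so it is the argument that would survive outside the $L^2$ setting. Your identification of the coefficients, $\langle u_i, f(\cdot)\rangle = \sigma_i v_i$ $\mu$-a.e.\ (testing against all $v \in L^2(\Omega)$ rather than the paper's indicator functions $\chi_A$), and the final intersection of countably many full-measure sets coincide with the paper's proof up to cosmetic differences.
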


\begin{proof}
By the singular value decomposition of $T = T_f$ there exist $\sigma_i$, $u_i$, and $v_i$ with the stated properties such that $T = \sum_{i = 1}^r \sigma_i u_i \langle v_i, \cdot \rangle_{L^2(\Omega)}$, where the sum converges in Hilbert--Schmidt norm; see, e.g.,~\cite[Theorem~4.3.2]{Hsing2015}. The isometric isomorphism discussed above then implies the claimed series representation of $f$ in $L^2(\Omega;\mathcal H)$.

To show the second part of the theorem, we first note that the range of $T$ is the separable Hilbert space $\mathcal H_0 = \overline{\Span}\{ u_i : i=1,2,\dots,r \}$. As a consequence,
\[
\int_A f \, \mathrm{d}\mu = \int_{\Omega} f \chi_A \, \mathrm{d}\mu = T\chi_A \in \mathcal H_0 \quad \forall A \in \mathcal A.
\]
Since the measure is finite, this implies -- by Proposition~1.2.13 in~\cite{Hytonen2016} -- that $f(y) \in \mathcal H_0$ for $\mu$-almost every $y \in \Omega$. Since $\{u_i\}$ is an orthonormal basis of $\mathcal H_0$, this in turn implies
\begin{equation}\label{eq:FourierPointwise}
f(y) = \sum_{i=1}^r \langle f(y),u_i \rangle u_i \quad \text{$\mu$-a.e.}
\end{equation}
For every $i$, the function strongly $y \mapsto \langle f(y),u_i \rangle$ is $\mu$-measurable and hence $\mu$-a.e.~equal to an $\mathcal A$-measurable function~\cite[Proposition~1.1.16]{Hytonen2016}. Then, for every $A \in \mathcal A$, 
\begin{align*}
\int_A \langle f(y), u_i \rangle \, d\mu &= \Big\langle \int_A f(y) \, d\mu, u_i \Big\rangle \\ &= \langle T \chi_A, u_i \rangle = \sum_{j=1}^r \sigma_j \langle u_j, u_i \rangle \langle v_j, \chi_A \rangle_{L^2(\Omega)}  =  \int_A \sigma_i v_i(y) \, d \mu
\end{align*}
where we have used that the Bochner integral can be interchanged with bounded linear functionals; see, e.g.,~\cite[Eq.~(1.2)]{Hytonen2016}. This implies
\[
\langle f(y), u_i \rangle = \sigma_i v_i(y) \quad \text{$\mu$-a.e.},
\]
which together with~\eqref{eq:FourierPointwise} shows~\eqref{eq:SVD}.
\end{proof}

The singular values are uniquely determined by $f$ and independent of its representative. The number $r \in \mathbb N \cup \{+\infty\}$ of positive singular values is called the rank of $f$, denoted by $\rank(f)$. By the Schmidt--Mirsky theorem~\cite[Theorem 4.4.7]{Hsing2015} and the isomorphism explained above, it follows that the truncated function
\[
f_k = \sum_{i = 1}^k \sigma_i u_i v_i = P_{{\mathcal U}_k} f,\quad {\mathcal U}_k = \Span\{u_1,\ldots, u_k\},
\]
is the best approximation of $f$ by a function of rank at most $k$ in the $L^2(\Omega;\mathcal H)$-norm. As any other projection of $f$ onto a $k$-dimensional subspace has rank at most $k$, it follows
\begin{equation} \label{eq:dk2}
 d_k^{(2)}(f) = \|f-f_k\|_{L^2(\Omega; \mathcal H)} = \big( \sigma_{k+1}^2 + \sigma_{k+2}^2  +\cdots )^{1/2}.
\end{equation}

\section{Expected volume}

For $k\ge 1$, we consider the product measure space $(\Omega^k,\mathcal A^{\otimes k}, \bmu)$, where $\mathcal A^{\otimes k}$ denotes the product $\sigma$-algebra (the smallest $\sigma$-algebra containing Cartesian products $A_1 \times \dots \times A_k$ of sets $A_1,\dots,A_k \in \mathcal A$) and $\bmu:=\mu^{\otimes k}$ denotes the product measure (the unique measure on $\mathcal A^{\otimes k}$ satisfying $\bmu(A_1 \times \dots \times A_k) = \mu(A_1) \cdots \mu(A_k)$). Let $f \in L^2(\Omega;\mathcal H)$. Given $k$ sample points $\by = (y_1,\dots,y_k)$, the Gramian of $f(y_1),\dots,f(y_k) \in \mathcal H$ is
\[
G^{(k)}(\by) =
\begin{bmatrix} \langle f(y_1), f(y_1) \rangle & \cdots & \langle f(y_1), f(y_k) \rangle \\
\vdots && \vdots \\
\langle f(y_k), f(y_1) \rangle & \cdots & \langle f(y_k), f(y_k) \rangle
\end{bmatrix}
\in \R^{k\times k}.
\]
Its determinant
\[
\det G^{(k)}(\by) = \sum_{\pi \in \mathcal S_k} \sign(\pi) \prod_{i = 1}^k \langle f(y_i), f(y_{\pi(i)}) \rangle,
\]
where $\mathcal S_k$ denotes the set of all permutations of $(1,\dots,k)$, is often called the volume of the $f(y_1),\dots,f(y_k)$. Note that in regard of the equivalence class of $f$ this quantity is only $\bmu$-almost everywhere uniquely defined. We therefore regard $\det G^{(k)}(\by)$ as an equivalence class of $\bmu$-almost everywhere equal $\bmu$-measurable real-valued functions. In turn, the Lebesgue integral of $\det G^{(k)}$, called the expected volume, can be defined. The following lemma shows that this quantity can be computed from the singular values of $f$, in analogy to~\cite[Lemma~3.1]{Deshpande2006}.

\begin{lemma} \label{lemma:integral}
With the notation introduced above, it holds that
\[
\int_{\Omega^k} \det G^{(k)} \, \mathrm{d}\bmu = \sum_{\substack{j_1,\ldots,j_k = 1 \\ \text{\rm $j_1,\dots,j_k$ mutually distinct}}}^r \sigma_{j_1}^2 \cdots \sigma_{j_k}^2.
\]
\end{lemma}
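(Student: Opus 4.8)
The plan is to reduce everything to the orthonormality relations of the two Schmidt systems. First I would invoke Theorem~\ref{thm: Schmidt decomposition} to write $f(y) = \sum_{a=1}^r \sigma_a u_a v_a(y)$ in $\mathcal H$ for $\mu$-almost every $y$, and then use orthonormality of $\{u_a\}$ to obtain, for $\bmu$-almost every $\by$,
\[
\langle f(y_i), f(y_j)\rangle = \sum_{a=1}^r \sigma_a^2\, v_a(y_i)\, v_a(y_j).
\]
Equivalently, $G^{(k)}(\by) = M(\by)\,M(\by)^{\top}$, where $M(\by)$ has entries $M(\by)_{ia} = \sigma_a v_a(y_i)$. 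Before anything else I would verify that $\det G^{(k)}$ is genuinely integrable: Hadamard's inequality for Gram determinants gives $0 \le \det G^{(k)}(\by) \le \prod_{i=1}^k \|f(y_i)\|^2$, whose integral over $\Omega^k$ equals $\big(\int_\Omega \|f\|^2\,\mathrm{d}\mu\big)^k = \|f\|_{L^2(\Omega;\mathcal H)}^{2k} < \infty$ by Tonelli. This both legitimizes the integral in question and, crucially, supplies the majorant for the limiting argument below.

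For the core computation I would first treat the case of finite rank $r$. The Cauchy--Binet formula applied to $G^{(k)} = M M^{\top}$ yields
\[
\det G^{(k)}(\by) = \sum_{\substack{S \subseteq \{1,\dots,r\}\\ |S| = k}} \Big( \prod_{s\in S}\sigma_s^2 \Big)\, \Big(\det\big[ v_{s_j}(y_i) \big]_{i,j=1}^{k}\Big)^{2},
\]
the sum being empty (and $\det G^{(k)} = 0$) when $r < k$. Expanding each squared determinant over pairs of permutations, integrating term by term with Fubini, and invoking $\langle v_a, v_b\rangle_{L^2(\Omega)} = \delta_{ab}$ collapses all cross terms and leaves $\int_{\Omega^k}\big(\det[ v_{s_j}(y_i)]\big)^2\,\mathrm{d}\bmu = k!$ for each $k$-element index set $S$ of distinct indices. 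Hence $\int_{\Omega^k}\det G^{(k)}\,\mathrm{d}\bmu = k!\sum_{|S|=k}\prod_{s\in S}\sigma_s^2$, and rewriting the sum over unordered $k$-subsets as a sum over ordered tuples of mutually distinct indices absorbs the factor $k!$ and produces exactly the claimed right-hand side. (Alternatively, one may expand the permutation formula for $\det G^{(k)}$ directly, but the Cauchy--Binet route keeps the bookkeeping cleanest.)

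The general case then follows by truncation. I would set $f^{(N)} = \sum_{a=1}^N \sigma_a u_a v_a$, which by~\eqref{eq:FourierPointwise} is the pointwise $\mathcal H$-orthogonal projection of $f$ onto $\Span\{u_1,\dots,u_N\}$. The finite-rank result already gives $\int_{\Omega^k}\det G_N^{(k)}\,\mathrm{d}\bmu = \sum \sigma_{j_1}^2\cdots\sigma_{j_k}^2$ over mutually distinct $j_1,\dots,j_k \le N$, and this increases to the full sum as $N\to\infty$ by monotone convergence. On the left-hand side, the $\mu$-a.e.\ convergence of the Schmidt series from Theorem~\ref{thm: Schmidt decomposition} yields $f^{(N)}(y)\to f(y)$ for $\mu$-a.e.\ $y$, hence $\det G_N^{(k)}(\by)\to \det G^{(k)}(\by)$ for $\bmu$-a.e.\ $\by$ (the exceptional set in each coordinate is $\bmu$-null by Fubini and finiteness of $\mu$). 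Since $f^{(N)}(y)$ is a projection, $\|f^{(N)}(y)\|\le\|f(y)\|$, so $0\le \det G_N^{(k)}(\by)\le \prod_{i=1}^k\|f(y_i)\|^2$ uniformly in $N$, and dominated convergence lets me pass to the limit on the left. Equating the two limits finishes the proof.

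I expect the only real obstacle to be the infinite-rank bookkeeping, namely justifying the interchange of the potentially infinite summation over singular values with the integration over the product space. The truncation-plus-dominated-convergence scheme resolves this cleanly: Hadamard's inequality furnishes the integrable majorant $\prod_i\|f(y_i)\|^2$, while the $\mu$-a.e.\ pointwise validity of the Schmidt series, already established in Theorem~\ref{thm: Schmidt decomposition}, supplies the requisite pointwise convergence.
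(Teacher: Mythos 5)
Your proposal is correct, and its overall skeleton matches the paper's: reduce to finite rank via the Schmidt truncation, collapse the integral using orthonormality of the $v_i$ together with Tonelli/Fubini, then pass to the limit by dominated convergence with a majorant of the form $\prod_{i=1}^k \|f(y_i)\|^2$ (your Hadamard bound is the paper's bound without the harmless factor $k!$). The genuine difference lies in the finite-rank computation. You factor $G^{(k)}(\by) = M(\by)M(\by)^{\top}$ and apply Cauchy--Binet, so that the integrand becomes a sum over $k$-element index sets $S$ of $\bigl(\prod_{s\in S}\sigma_s^2\bigr)\bigl(\det[v_{s_j}(y_i)]\bigr)^2$; integrating each squared minor gives $k!$, and the factor $k!$ is then absorbed by passing from unordered subsets to ordered tuples of distinct indices. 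The paper instead expands $\det G^{(k)}_n$ directly over permutations, integrates term by term, and is led to the determinant of the $0$/$1$ matrix $[\delta_{j_\alpha,j_\beta}]$, which vanishes unless $j_1,\dots,j_k$ are distinct --- this produces the sum over ordered distinct tuples in one step, with no subset/ordering conversion. Your route makes the nonnegativity of each contribution manifest (each term is a squared minor times nonnegative weights) and keeps the combinatorics of ``distinctness'' packaged inside Cauchy--Binet, at the cost of invoking that identity; the paper's route is more elementary, needing only the permutation expansion and the observation about repeated rows. One small point worth making explicit in your write-up: the term-by-term Fubini step for the squared minors is justified because each term is a product over $i$ of functions of $y_i$ alone, each in $L^1(\Omega)$ by Cauchy--Schwarz, so its absolute value is integrable on $\Omega^k$ by Tonelli; this is the same justification the paper uses implicitly.
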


\begin{proof}
Consider the Schmidt decomposition from Theorem~\ref{thm: Schmidt decomposition}. For every finite $n \le r$, let
\[
f_n(y) = \sum_{i=1}^n \sigma_i u_i v_i(y).
\]
Since the $v_i$ are $\mathcal A$-measurable, the corresponding determinant
\[
\det G^{(k)}_n(\by) = \sum_{\pi \in \mathcal S_k} \sign(\pi) \prod_{i = 1}^k \langle f_n(y_i), f_n(y_{\pi(i)}) \rangle = \sum_{\pi \in \mathcal S_k} \sign(\pi) \prod_{i = 1}^k \sum_{j = 1}^n \sigma_j^2 v_j(y_i) v_j(y_{\pi(i)})
\]
is a nonnegative $\mathcal A^{\otimes k}$-measurable function, which satisfies, using Tonelli's theorem,
\begin{align*}
\int_{\Omega^k} \det G^{(k)}_n \, d \bmu &= \sum_{\pi \in \mathcal S_k} \sign(\pi) \int_{\Omega^k} \prod_{i = 1}^k \sum_{j = 1}^n \sigma_j^2 v_j(y_i) v_j(y_{\pi(i)}) \,\mathrm{d} \bmu \\
&= \sum_{j_1=1}^n \cdots \sum_{j_k=1}^n \sigma_{j_1}^2 \cdots \sigma_{j_k}^2  \sum_{\pi \in \mathcal S_k} \sign(\pi) \int_{\bOmega} \prod_{i=1}^k v_{j_i}(y_i) v_{j_{\pi(i)}} (y_i) \, \mathrm{d}\bmu \\
&= \sum_{j_1=1}^n \cdots \sum_{j_k=1}^n \sigma_{j_1}^2 \cdots \sigma_{j_k}^2 \sum_{\pi \in \mathcal S_k} \sign(\pi) \prod_{i=1}^k \langle v_{j_i}, v_{j_{\pi(i)}} \rangle_{L^2(\Omega)} \\
&= \sum_{j_1=1}^n \cdots \sum_{j_k=1}^n \sigma_{j_1}^2 \cdots \sigma_{j_k}^2 \sum_{\pi \in \mathcal S_k} \sign(\pi) \prod_{i=1}^k \delta_{j_i, j_{\pi(i)}}\\
&= \sum_{j_1=1}^n \cdots \sum_{j_k=1}^n \sigma_{j_1}^2 \cdots \sigma_{j_k}^2 \det (P_{j_1,\dots,j_k}),
\end{align*}
with the matrix $[P_{j_1,\dots,j_k}]_{\alpha,\beta} = \delta_{j_\alpha,j_\beta}$ for $\alpha, \beta = 1,\ldots, k$. If all $j_1,\dots,j_d$ are distinct, then $\det (P_{j_1,\dots,j_k}) = 1$, otherwise $\det (P_{j_1,\dots,j_k}) = 0$, since then $P_{j_1,\dots,j_k}$ contains identical rows. Therefore,
\[
\int_{\Omega^k} \det G^{(k)}_n \, d \bmu = \sum_{\substack{j_1,\ldots,j_k = 1 \\ \text{\rm $j_1,\dots,j_k$ mutually distinct}}}^n \sigma_{j_1}^2 \cdots \sigma_{j_k}^2.
\]
If $r$ is finite, this proves the claim by taking $n=r$. Otherwise we take the limit $n \to \infty$ and argue that
\[
\int_{\Omega^k} \det G^{(k)} \, d \bmu = \lim_{n \to \infty} \int_{\Omega^k} \det G^{(k)}_n \, d \bmu
\]
by dominated convergence. This is possible because $\det G^{(k)}_n(\by) \to \det G^{(k)}(\by)$ $\bmu$-almost everywhere (a consequence of~\eqref{eq:SVD}) and
\[
\left\lvert \det G^{(k)}_n(\by) \right\rvert \le \sum_{\pi \in \mathcal S_k} \Big\lvert \prod_{i = 1}^k \langle f_n(y_i), f_n(y_{\pi(i)}) \rangle \Big\rvert \le \sum_{\pi \in \mathcal S_k} \prod_{i=1}^k \| f_n(y_i) \|^2 \le \sum_{\pi \in \mathcal S_k}  \prod_{i=1}^k \| f(y_i) \|^2.
\]
Since $f \in L^2(\Omega;\mathcal H)$, the right hand side is a dominating integrable function on $\Omega^k$.
\end{proof}

\section{Proof of Theorem~\ref{thm:main}}

Let us assume that $\rank(f) \ge k$. Then we can define
\[
\varrho = \frac{\det G^{(k)}}{\int_{\Omega^k} \det G^{(k)} \,\mathrm{d}\bmu}.
\]
By Lemma~\ref{lemma:integral}, the denominator is positive. We have $\varrho \ge 0$ ($\bmu$-almost everywhere) and $\int_{\Omega^k} \varrho \,\mathrm{d} \bmu = 1$.  Thus, we can interpret $\varrho$ as a probability density function on the product measure space. We now aim to bound
\[
\mathbb E_\varrho\big( \|f - \Pi_\by f \|^2_{L^2(\Omega; \mathcal H)} \big) = \int_{\Omega^k} \|f - \Pi_\by f \|^2_{L^2(\Omega; \mathcal H)} \varrho(\by) \, \mathrm{d}\bmu(\by),
\]
where $\Pi_\by f: \Omega \to \mathcal H$ is the $\mathcal H$-orthogonal projection of $f$ onto $\Span\{f(y_1),\dots,f(y_k) \}$. Note that the projector $\Pi_\by$ depends on the chosen representative $f$ in the equivalence class; indeed, the subspaces spanned by $f(y_1),\dots,f(y_k)$ can be completely different for two different representatives in the same equivalence class, but only on a set of zero product measure. The statement and proof of the following theorem, which is the analog to~\cite[Theorem~1.3]{Deshpande2006}, pay attention to these subtleties.

\begin{theorem}\label{thm: expected distance}
Given $f \in L^2(\Omega;\mathcal H)$, consider the Schmidt decomposition~\eqref{eq:SVD} and assume $r = \rank(f) \ge k$. Then the following holds:
\begin{itemize}
\item[\upshape (i)]
For every fixed $\by \in \Omega^k$ the function $y' \mapsto f(y') - \Pi_\by f(y')$, with $\Pi_\by f$ defined as above, defines an equivalence class of Bochner integrable functions which belongs to $L^2(\Omega;\mathcal H)$. The nonnegative function $\by \mapsto \| f - \Pi_\by f \|_{L^2(\Omega;\mathcal H)}^2$ is therefore well-defined.
\item[\upshape (ii)]
The class of $\bmu$-almost everywhere equal functions $\by \mapsto \| f - \Pi_\by f \|_{L^2(\Omega;\mathcal H)}^2\cdot \varrho(\by)$ contains a representative which is $\mathcal A^{\otimes k}$-measurable. Hence $\mathbb E_\varrho ( \|f - \Pi_\by f \|^2_{L^2(\Omega; \mathcal H)} )$ is well-defined.
\item[\upshape (iii)]
It holds that
\begin{equation*} 
\sum_{i = k+1}^r \sigma_i^2 \le \mathbb E_\varrho\big( \|f - \Pi_\by f \|^2_{L^2(\Omega; \mathcal H)} \big) \le (k+1)\sum_{i = k+1}^r \sigma_i^2.
\end{equation*}
\end{itemize}
\end{theorem}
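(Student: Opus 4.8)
The plan is to follow Deshpande et al., reducing all three parts to a single algebraic identity: for any fixed $\by$, any $y'\in\Omega$, and any fixed strongly $\mu$-measurable representative of $f$,
\[
 \det G^{(k+1)}(y_1,\dots,y_k,y') = \|f(y')-\Pi_\by f(y')\|^2\cdot\det G^{(k)}(\by),
\]
the classical base-times-height formula for Gram determinants. I would verify it in the case that $f(y_1),\dots,f(y_k)$ are linearly independent (so $\det G^{(k)}(\by)>0$ and $\Pi_\by$ projects onto a genuine $k$-dimensional space) and observe that both sides vanish in the degenerate case, so that it holds unconditionally. Part (i) is then immediate: for fixed $\by$ the span $V_\by=\Span\{f(y_1),\dots,f(y_k)\}$ is a fixed finite-dimensional, hence closed, subspace, so $\Pi_\by$ is a bounded linear operator with $\|\Pi_\by\|\le1$; composing the strongly $\mu$-measurable $f$ with it preserves strong measurability, and $\|f(y')-\Pi_\by f(y')\|\le\|f(y')\|$ yields $f-\Pi_\by f\in L^2(\Omega;\mathcal H)$, so $\by\mapsto\|f-\Pi_\by f\|_{L^2(\Omega;\mathcal H)}^2$ is well-defined.

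For part (ii) I would multiply the identity by the reciprocal of $\int_{\Omega^k}\det G^{(k)}\,\mathrm d\bmu$ (positive by Lemma~\ref{lemma:integral} because $r\ge k$) and integrate in $y'$. Since $\det G^{(k)}(\by)$ cancels against the denominator of $\varrho$, this gives, for every $\by$,
\[
 \|f-\Pi_\by f\|^2_{L^2(\Omega;\mathcal H)}\,\varrho(\by) = \frac{1}{\int_{\Omega^k}\det G^{(k)}\,\mathrm d\bmu}\int_\Omega\det G^{(k+1)}(y_1,\dots,y_k,y')\,\mathrm d\mu(y').
\]
The integrand $\det G^{(k+1)}$ is a nonnegative $\mathcal A^{\otimes(k+1)}$-measurable function, being a polynomial in the jointly measurable entries $\langle f(\cdot),f(\cdot)\rangle$ exactly as in Lemma~\ref{lemma:integral}, so Tonelli's theorem makes the $y'$-integral an $\mathcal A^{\otimes k}$-measurable function of $\by$; this is the required measurable representative.

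For part (iii), integrating the displayed identity in $\by$ and applying Tonelli once more collapses the expectation to a ratio of expected volumes,
\[
 \mathbb E_\varrho\big(\|f-\Pi_\by f\|^2_{L^2(\Omega;\mathcal H)}\big) = \frac{\int_{\Omega^{k+1}}\det G^{(k+1)}\,\mathrm d\mu^{\otimes(k+1)}}{\int_{\Omega^k}\det G^{(k)}\,\mathrm d\bmu}.
\]
Lemma~\ref{lemma:integral} evaluates numerator and denominator as $(k+1)!\,e_{k+1}$ and $k!\,e_k$, where $e_j$ is the $j$-th elementary symmetric polynomial in $\lambda_i:=\sigma_i^2$, whence the expectation equals $(k+1)\,e_{k+1}/e_k$. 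Writing $S_k:=\sum_{i=k+1}^r\sigma_i^2$, the two claimed bounds reduce to $e_{k+1}\le S_k\,e_k$ and $(k+1)e_{k+1}\ge S_k\,e_k$. For the upper one I would group the $(k+1)$-subsets by their largest index $m>k$, giving $e_{k+1}=\sum_{m>k}\lambda_m\,e_k(\lambda_1,\dots,\lambda_{m-1})\le e_k\sum_{m>k}\lambda_m=S_k e_k$. For the lower one I would use the identity $(k+1)e_{k+1}=\sum_{|T|=k}\big(\prod_{i\in T}\lambda_i\big)\sum_{j\notin T}\lambda_j$ together with the term-by-term bound $\sum_{j\notin T}\lambda_j\ge S_k$, valid for every $k$-subset $T$ because $\sum_{j\in T}\lambda_j\le\lambda_1+\dots+\lambda_k$ by the monotone ordering of the $\lambda_i$.

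I expect the main obstacle to be the measure-theoretic bookkeeping rather than the combinatorics: establishing joint $\mathcal A^{\otimes(k+1)}$-measurability of the Gramian entries for a fixed strongly measurable representative, so that $\det G^{(k+1)}$ is a legitimate nonnegative measurable integrand; checking that each appeal to Tonelli is justified; and, when $r=\infty$, passing to the limit through the finite truncations $f_n$ via monotone or dominated convergence exactly as in Lemma~\ref{lemma:integral}. Controlling the representative-dependence of $\Pi_\by$ on the $\bmu$-null set where some $y_i$ falls into the exceptional set requires the same care already flagged before the theorem statement.
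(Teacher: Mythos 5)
Your proposal is correct and, in its essentials, coincides with the paper's proof: the same Schur-complement identity $\det G^{(k+1)}(\by,y')=\det G^{(k)}(\by)\,\|f(y')-\Pi_\by f(y')\|^2$, with both sides vanishing in the linearly dependent case, drives everything; part (i) is argued identically (bounded projection onto a closed finite-dimensional span, norm contraction); part (ii) is obtained exactly as you describe, by Tonelli applied to the nonnegative measurable integrand $\det G^{(k+1)}$; and your upper bound in (iii) --- grouping $(k+1)$-subsets by their largest index and then dropping the constraint on the remaining $k$ indices --- is literally the paper's computation. Two points of comparison are worth making. First, the measurability issue you flag as the main obstacle is resolved in the paper by a specific device: one fixes a representative of $f$ that is \emph{everywhere} (not merely $\mu$-a.e.) a pointwise strong limit of simple functions, which makes $(y,y')\mapsto\langle f(y),f(y')\rangle$ genuinely product-measurable and hence $\det G^{(k+1)}$ genuinely $\mathcal A^{\otimes (k+1)}$-measurable; your appeal to ``jointly measurable entries'' needs exactly this choice of representative to be legitimate, so consider that gap closed by this device rather than by an argument you supplied. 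Second, your lower bound in (iii) takes a genuinely different (and correct) route: you prove the symmetric-function inequality $(k+1)e_{k+1}\ge e_k\sum_{i>k}\lambda_i$ via the identity $(k+1)e_{k+1}=\sum_{|T|=k}\bigl(\prod_{i\in T}\lambda_i\bigr)\sum_{j\notin T}\lambda_j$ and the term-wise bound $\sum_{j\notin T}\lambda_j\ge\sum_{i>k}\lambda_i$, which uses the summability $\sum_i\lambda_i<\infty$. The paper instead gets the lower bound in one line from \eqref{eq:dk2}: since $\Pi_\by$ projects onto a subspace of dimension at most $k$, the inequality $\|f-\Pi_\by f\|^2_{L^2(\Omega;\mathcal H)}\ge\sum_{i=k+1}^r\sigma_i^2$ holds pointwise in $\by$, not just in expectation. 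The paper's route is shorter and yields the stronger pointwise statement (which is also what the first inequality of Theorem~\ref{thm:main} rests on), while yours stays entirely within the elementary-symmetric-polynomial calculus and needs the exact ratio formula $(k+1)e_{k+1}/e_k$ for the expectation; both are valid.
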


\begin{proof}
Ad~(i). Consider a fixed representative of $f$. Then the orthogonal projector $T = \Pi_\by$ onto $\Span\{f(y_1),\dots,f(y_k) \}$ is a bounded linear operator on $\mathcal H$. In turn, $T f$ is Bochner-integrable. 
Moreover, $\| f(y') - T f(y') \| = \|(I-T) f(y') \| \le \| f(y') \|$ for every $y' \in \Omega$. Therefore, $\int_\Omega \| f(y') - T f(y') \|^2 \, \mathrm{d}\mu \le \int_\Omega \| f(y')\|^2 \, \mathrm{d}\mu < \infty$. Obviously, for different choices of representatives $f$, the functions $f - \Pi_\by f$ are $\bmu$-almost everywhere equal.

Ad~(ii). 
Take a representative of $f$ that is everywhere a pointwise strong limit of simple functions~\cite[Proposition~1.1.16]{Hytonen2016}. 
The corresponding volume function $\by \mapsto \det G^{(k)}(\by)$ is then $\mathcal A^{\otimes k}$-measurable. Likewise, the function $(\by,y') \mapsto \det G^{(k+1)}(\by,y')$ is $\mathcal A^{\otimes (k+1)}$-measurable. By Tonelli's theorem, the nonnegative function
\[
g(\by) = \int_\Omega \det G^{(k+1)}(\by,y') \, \mathrm{d} \mu(y')
\]
is then $\mathcal A^{\otimes k}$-measurable. Since the representative $f$ is fixed, the projector $\Pi_\by$ is defined in a meaningful way for every $\by \in \Omega^k$. We claim that for every $\by$ it holds that
\begin{equation}\label{eq:g}
g(\by) = \det G^{(k)}(\by) \int_\Omega \| f(y') - \Pi_\by f(y') \|^2 \, \mathrm{d}\mu(y').
\end{equation}
After division by $\int_{\Omega^k} \det G^{(k)} \,\mathrm{d}\bmu > 0$, this implies part (ii). To establish~\eqref{eq:g}, we first note that if $f(y_1),\dots,f(y_k)$ are linearly dependent then~\eqref{eq:g} trivially holds because it implies $\det G^{(k)}(\by) = 0$ as well as $g(\by) = 0$ (since $\det G^{(k+1)}(\by,y') = 0$ for all $y' \in \Omega$). We may therefore assume that $f(y_1),\dots,f(y_k)$ are linearly independent. Then $G^{(k)}(\by)$ is invertible and for all $y' \in \Omega$ it holds that
\begin{align*}
G^{(k+1)}(\by,y') &= 
\begin{bmatrix}
G^{(k)}(\by) & w_{\by,y'} \\
w^T_{\by,y'} & \| f(y') \|^2
\end{bmatrix} \\   
&= \begin{bmatrix}
I & 0 \\
w_{\by,y'}^T G^{(k)}(\by)^{-1}   & 1
\end{bmatrix}
\begin{bmatrix}
G^{(k)}(\by) & w_{\by,y'} \\
0  & \| f(y')\|^2 - w^T_{\by,y'} G^{(k)}(\by)^{-1} w_{\by,y'}^{}
\end{bmatrix},
\end{align*}
with the vector
\[
w_{\by,y'} = [\langle f(y_1), f(y') \rangle, \ldots, \langle f(y_k), f(y') \rangle ]^T \in \R^k.
\]
It is straightforward to show that the orthogonal projection $\Pi_\by f(y')$ of $f(y')$ onto the span of $f(y_1),\dots,f(y_k)$ satisfies $\|\Pi_\by f(y')\|^2 = w^T_{\by,y'} G^{(k)}(\by)^{-1} w_{\by,y'}$. Hence we have
\[
\det G^{(k+1)}(\by,y') = \det  G^{(k)}(\by) \cdot \|f(y') - \Pi_\by f(y')\|^2,
\]
which yields~\eqref{eq:g}.

Ad~(iii). The lower bound follows immediately from~\eqref{eq:dk2}. It remains to prove the upper bound. Using the same representative for $f$ as in (ii), by~\eqref{eq:g} we have
\[
\mathbb E_\varrho\big( \|f - \Pi_\by f \|^2_{L^2(\Omega; \mathcal H)} \big) \cdot \int_{\Omega^k} \det G^{(k)} \, \mathrm{d} \bmu = \int_{\Omega^{k+1}} \det G^{(k+1)} \, \mathrm{d}\mu^{\otimes (k+1)}.
\]
Applying Lemma~\ref{lemma:integral} to both sides gives
\begin{equation}\label{eq:ExpectedValue}
\mathbb E_\varrho\big( \|f - \Pi_\by f \|^2_{L^2(\Omega; \mathcal H)} \big) = \frac{\sum_{j_1,\ldots,j_{k+1}} \sigma_{j_1}^2 \cdots \sigma_{j_k}^2 \sigma_{j_{k+1}}^2}{\sum_{j_1,\ldots,j_{k}} \sigma_{j_1}^2 \cdots \sigma_{j_k}^2},
\end{equation}
where the summations are performed over mutually distinct indices ranging from $1$ to~$r$. As there are $(k+1)!$ different ways of arranging $k+1$ mutually distinct numbers, one deduces that
\begin{align*}
\sum_{j_1,\ldots,j_{k+1}} \sigma_{j_1}^2 \cdots \sigma_{j_k}^2 \sigma_{j_{k+1}}^2 & = (k+1)! \sum_{j_1 < \cdots < j_{k+1}} \sigma_{j_1}^2 \cdots \sigma_{j_k}^2 \sigma_{j_{k+1}}^2 \\
&= (k+1)!  \sum_{j = k+1}^r \sigma^2_j \sum_{j_1 < \cdots < j_{k} < j} \sigma_{j_1}^2 \cdots \sigma_{j_k}^2 \\
&\le (k+1)!  \sum_{j = k+1}^r \sigma^2_j \sum_{j_1 < \cdots < j_{k}} \sigma_{j_1}^2 \cdots \sigma_{j_k}^2 \\
&= (k+1) \sum_{j = k+1}^r \sigma^2_j \sum_{j_1,\ldots,j_{k}} \sigma_{j_1}^2 \cdots\sigma_{j_k}^2.
\end{align*}
Inserting this inequality into~\eqref{eq:ExpectedValue} completes the proof.
\end{proof}

We are ready to state the main result of this section, which in light of~\eqref{eq:dk2} also proves Theorem~\ref{thm:main}.

\begin{theorem}
Let $f \in L^2(\Omega;\mathcal H)$ with rank $r \in \mathbb N \cup \{\infty\}$. There exists a measurable set $\hat \bOmega \subseteq \Omega^k$ of positive product measure such that for every representative $f$ in the equivalence class and $\bmu$-almost every sample tuple $\by \in \hat \bOmega$ it holds that
\[
\|f - \Pi_\by f \|^2_{L^2(\Omega; \mathcal H)} \le (k+1)\sum_{i = k+1}^r \sigma_i^2.
\]
\end{theorem}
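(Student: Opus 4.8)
The plan is to deduce the final statement from Theorem~\ref{thm: expected distance} by a standard averaging argument. The key observation is that since $\varrho$ is a probability density on $\Omega^k$ and the expectation $\mathbb E_\varrho ( \|f - \Pi_\by f \|^2_{L^2(\Omega; \mathcal H)} )$ is bounded above by $(k+1)\sum_{i=k+1}^r \sigma_i^2$, the set of $\by$ where the integrand exceeds this threshold cannot carry all the probability mass. More precisely, if the integrand were strictly greater than $(k+1)\sum_{i=k+1}^r \sigma_i^2$ on a set of full $\varrho$-measure, the expectation would strictly exceed the bound, a contradiction. Hence the set where the bound \emph{is} satisfied must have positive $\varrho$-measure, and therefore positive $\bmu$-measure (since $\varrho$ vanishes only where $\det G^{(k)}$ does).

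First I would treat the trivial case $r < k$ separately: here $\sum_{i=k+1}^r \sigma_i^2 = 0$, and one must still produce a set $\hat\bOmega$ of positive measure on which $f(y_1),\dots,f(y_k)$ span a subspace containing $f$ almost everywhere, so that the projection error vanishes; this should follow from the Schmidt decomposition since $f$ has rank $r < k$, though I would need to check a set of positive measure where the samples are linearly independent and span the range exists. Assuming $r \ge k$, I would fix a representative of $f$ as in part (ii) of Theorem~\ref{thm: expected distance}, so that $h(\by) := \| f - \Pi_\by f \|_{L^2(\Omega;\mathcal H)}^2 \cdot \varrho(\by)$ is $\mathcal A^{\otimes k}$-measurable. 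I would then define
\[
\hat\bOmega = \big\{ \by \in \Omega^k : \det G^{(k)}(\by) > 0 \ \text{and}\ \| f - \Pi_\by f \|_{L^2(\Omega;\mathcal H)}^2 \le (k+1)\textstyle\sum_{i=k+1}^r \sigma_i^2 \big\},
\]
which is measurable by the above.

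To show $\bmu(\hat\bOmega) > 0$, I would argue by contradiction. Let $S = \{\by : \det G^{(k)}(\by) > 0\}$, which has positive measure by Lemma~\ref{lemma:integral}. On $S$ we have $\varrho(\by) > 0$. If $\bmu(\hat\bOmega) = 0$, then $\bmu$-almost everywhere on $S$ the strict inequality $\| f - \Pi_\by f \|_{L^2(\Omega;\mathcal H)}^2 > (k+1)\sum_{i=k+1}^r \sigma_i^2$ holds, whence
\[
\mathbb E_\varrho\big( \|f - \Pi_\by f \|^2_{L^2(\Omega; \mathcal H)} \big) = \int_S \| f - \Pi_\by f \|_{L^2(\Omega;\mathcal H)}^2 \, \varrho \, \mathrm{d}\bmu > (k+1)\sum_{i=k+1}^r \sigma_i^2,
\]
contradicting the upper bound in Theorem~\ref{thm: expected distance}(iii). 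Thus $\bmu(\hat\bOmega) > 0$.

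The main obstacle I anticipate is the representative-independence clause in the statement: the set $\hat\bOmega$ must work ``for every representative $f$ in the equivalence class,'' yet $\Pi_\by$ itself depends on the representative. The resolution is that two representatives differ only on a $\mu$-null set, so by Fubini/Tonelli the tuples $\by$ at which the sampled values $f(y_1),\dots,f(y_k)$ differ between representatives form a $\bmu$-null set in $\Omega^k$; hence $\det G^{(k)}$, the projection error, and therefore $\hat\bOmega$ itself are unchanged up to $\bmu$-measure zero, and the conclusion ``$\bmu$-almost every $\by \in \hat\bOmega$'' is stable across representatives. I would make this precise by fixing the distinguished representative from part (ii) to \emph{define} $\hat\bOmega$, and then noting that for any other representative the defining inequalities agree $\bmu$-almost everywhere.
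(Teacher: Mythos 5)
Your main case ($r \ge k$) is correct and is essentially the paper's own proof: fix the measurable representative from Theorem~\ref{thm: expected distance}(ii), define $\hat\bOmega$ inside the set where $\varrho>0$ (your $\det G^{(k)}>0$ is the same set), and run the averaging/contradiction argument against the upper bound in part~(iii). You even spell out correctly the step the paper dismisses as ``standard,'' including the point that strict pointwise inequality on a positive-measure set where $\varrho>0$ forces strict inequality of the expectation. Your treatment of representative-independence (fix the distinguished representative to define $\hat\bOmega$; note that any two representatives give sampled values, hence projectors and errors, that agree off a $\bmu$-null set) is also exactly the paper's resolution. One cosmetic remark: measurability of your $\hat\bOmega$ needs the small rewriting the paper makes explicit, namely that on $\{\varrho>0\}$ the condition $\|f-\Pi_\by f\|^2\le c$ is equivalent to $\|f-\Pi_\by f\|^2\varrho - c\varrho\le 0$, since part~(ii) only gives measurability of the product with $\varrho$, not of the error alone.

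The one genuine shortfall is the case $r<k$, which you flag but do not close (``I would need to check \dots exists''). The paper closes it with a clean self-reduction: apply the already-proven case with $r$ in place of $k$ (legitimate since $\rank(f)=r\ge r$), which yields a set $\tilde\bOmega\subseteq\Omega^r$ of positive $\mu^{\otimes r}$-measure on which
\[
\|f-\Pi_{(y_1,\dots,y_r)}f\|^2_{L^2(\Omega;\mathcal H)} \le (r+1)\sum_{i=r+1}^r \sigma_i^2 = 0,
\]
and then takes $\hat\bOmega=\tilde\bOmega\times\Omega^{k-r}$, which has positive $\bmu$-measure; appending the extra $k-r$ samples can only enlarge the span one projects onto, so the error stays zero. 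Your sketch (find a positive-measure set where the samples span the essential range of $f$) can be completed along the same lines via Lemma~\ref{lemma:integral} applied with $r$ samples, but note a small imprecision in it: with $k>r$ samples the vectors $f(y_1),\dots,f(y_k)$ can never be linearly independent; what you need is that some $r$ of them are, which is exactly what the padding argument delivers.
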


\begin{proof}
For the moment, let us assume that $r  \ge k$. In this case, the proof boils down to the fact that the probability for a random variable to not being larger than its expected value is positive. For completeness, we provide the full argument. As argued in the proof of Theorem~\ref{thm: expected distance}(ii) there exists a suitable representative $f$ such that the two functions $\by \mapsto \varrho(\by)$ and $\by \mapsto \|f - \Pi_\by f \|^2_{L^2(\Omega; \mathcal H)} \cdot \varrho(\by)$ are both $\mathcal A^{\otimes k}$-measurable. Hence, the sets $B = \{ \by \in \Omega^k \colon \varrho(\by) > 0 \}$ and
\begin{align*}
\hat \bOmega &= \left\{ \by \in B \colon \|f - \Pi_\by f \|^2_{L^2(\Omega; \mathcal H)} \le (k+1)\sum_{i = k+1}^r \sigma_i^2 \right\} \\
&= \left\{ \by \in B \colon  \|f - \Pi_\by f \|^2_{L^2(\Omega; \mathcal H)} \cdot \varrho(\by) - (k+1)\sum_{i = k+1}^r \sigma_i^2 \cdot \varrho(\by) \le 0 \right\}
\end{align*}
both belong to $\mathcal A^{\otimes k}$. It is then a standard argument to show that we must have $\bmu(\hat \bOmega) > 0$, since otherwise Theorem~\ref{thm: expected distance}(iii) would be violated. For any other representative of $f$, the values of $\| f - \Pi_\by f \|^2_{L^2(\Omega;\mathcal H)}$ remain the same for $\bmu$-almost all $\by$. This completes the proof in the case $r \ge k$.

In the case $\rank(f) = r < k$ we can apply the theorem to $r$ instead of $k$. Then
\[
\| f - \Pi_{(y_1,\dots,y_r)} f \|_{L^2(\Omega;\mathcal H)}^2 \le (r+1)\sum_{i = r+1}^r \sigma_i^2 = 0
\]
for all tuples $(y_1,\dots,y_r)$ in a subset $\tilde \bOmega$ of $\Omega^r$ with positive $\mu^{\otimes r}$-measure. Then for all $\by \in \tilde \bOmega \times \Omega^{k-r}$, which has positive $\bmu$-measure, we also have
\[
\| f - \Pi_\by f \|_{L^2(\Omega;\mathcal H)}^2 \le \| f - \Pi_{(y_1,\dots,y_r)} f \|_{L^2(\Omega;\mathcal H)}^2 = 0
\]
since the subspaces on which one projects only get larger.
\end{proof}

\paragraph*{Acknowledgements} The work of A.U.~was supported by the Deutsche Forschungsgemeinschaft (DFG, German Research Foundation) – Projektnummer 506561557. The authors thank the reviewers for their detailed reading and constructive feedback.

\small
\bibliography{main}

\end{document}